\newtheorem{thm}{Theorem}[section]
\newtheorem{lem}[thm]{Lemma}
\theoremstyle{definition}
\newtheorem{defn}[thm]{Definition}
\newcommand{\cl}[1]{\ensuremath{\overline{{#1}}}}
\newcommand{\ep}{\varepsilon}
\newcommand{\N}{\mathbb{N}}
\newcommand{\n}[1]{\ensuremath{\left\|{#1}\right\|}}
\newcommand{\ndot}{\ensuremath{\left\|\cdot\right\|}}
\newcommand{\R}{\mathbb{R}}
\newcommand{\restrict}[1]{\ensuremath{\!\!\upharpoonright_{#1}}}
\newcommand{\set}[2]{\ensuremath{\left\{{#1}\;:\;\,{#2}\right\}}}
\newcommand{\tn}[1]{\ensuremath{\tri{#1}\tri}}
\newcommand{\tndot}{\ensuremath{\tri\cdot\tri}}
\newcommand{\tri}{{\displaystyle |\kern-.9pt|\kern-.9pt|}}
\newcommand{\ttri}{|\kern-.9pt|\kern-.9pt|}
\newcommand{\ttrin}{\ttri\cdot\ttri}
\DeclareMathOperator{\aspan}{span}
\DeclareMathOperator{\ext}{ext}
\DeclareMathOperator{\sgn}{sgn}
\DeclareMathOperator{\strg}{str}
\DeclareMathOperator{\supp}{supp}
\begin{document}
\title{Smooth and polyhedral approximation in Banach spaces}
\date{\today}
\author{Victor Bible and Richard J.\ Smith}
\address{School of Mathematics and Statistics, University College Dublin, Belfield, Dublin 4, Ireland.}
\keywords{Polyhedral norm, smooth norm, renorming, boundary}
\thanks{The authors are supported
financially by Science Foundation Ireland under Grant Number `SFI
11/RFP.1/MTH/3112'.}
\subjclass[2010]{Primary 46B03, 46B20}

\begin{abstract}
We show that norms on certain Banach spaces $X$ can be approximated 
uniformly, and with arbitrary precision, on bounded subsets of $X$ by $C^\infty$ smooth norms and polyhedral norms. In particular, we show that this holds for any equivalent norm on $c_0(\Gamma)$, where $\Gamma$ is an arbitrary set. We also give a necessary condition for the existence of a polyhedral norm on a weakly compactly generated Banach space, which extends a well-known result of Fonf.
\end{abstract}
\maketitle

\section{Introduction}
Given a Banach space $(X,\ndot)$ and $\ep > 0$, we say that a new norm $\tndot$ is \emph{$\ep$-equivalent  to $\ndot$} if
\[
\tn{x} \leq \n{x} \leq (1 + \ep) \tn{x},
\]
for all $x \in X$. Suppose that P is some geometric property of norms, such as smoothness or strict convexity. We shall say that a norm $\ndot$ can be \emph{approximated by norms having P} if, given any $\ep>0$, there exists a norm having P that is \emph{$\ep$-equivalent to $\ndot$}. This is equivalent to the statement, often seen in the relevant literature, that $\ndot$ may be approximated uniformly, and with arbitrary precision, on bounded subsets of $X$ by norms having P.

The question of whether all equivalent norms on a given Banach space can be approximated by norms having P is a recurring theme in renorming theory. It is known to be true if P is the property of being strictly convex, or locally uniformly rotund (see \cite[Section II.4]{dgz:93}). (In fact, in these two cases, it is possible to show that if $\ndot$ has P, then the set of equivalent norms on $X$ having P is residual in the space of all equivalent norms on $X$, which is completely metrisable).

Several works in the literature, such as \cite{dfh:96,dfh:98,fhz:97,hj:14,hp:14,ht:14}, have addressed this question (or closely related questions) in the case of $C^k$ smoothness or polyhedrality; this question is again the subject of Section \ref{approx-norms} of this paper. 

\begin{defn}
We say the norm $\ndot$ of a Banach space $X$ is $C^k$ \emph{smooth} if its $k$th Fr\'{e}chet derivative exists and is continuous at every point of $X \setminus \{0\}$. The norm said to be \emph{$C^{\infty}$ smooth} if this holds for all $k \in \mathbb{N}$. 
\end{defn}

For separable spaces, we have the following recent and conclusive result.

\begin{thm}[{\cite[Theorem 2.10]{ht:14}}]\label{thm-ht}
Let $X$ be a separable Banach space with a $C^k$ smooth norm. Then any equivalent norm on $X$ can be approximated by $C^k$ smooth norms.
\end{thm}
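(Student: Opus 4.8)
The plan is to build the approximating norm $\tndot$ as the Minkowski gauge of a symmetric convex body with $C^k$-smooth boundary, obtained by smoothing the dual (supremum) representation of the target norm $\ndot$, while using the given $C^k$ smooth norm as an equivalence-preserving backbone. I would begin by exploiting separability: since $X$ is separable, $B_{X^*}$ is weak$^*$-metrizable and hence weak$^*$-separable, so I can fix a sequence $(f_n) \subseteq S_{X^*}$ that is weak$^*$-dense in $S_{X^*}$, whence
\[
\n{x} = \sup_n f_n(x) = \sup_n |f_n(x)| \qquad (x \in X).
\]
The task is then to replace this non-smooth supremum by a $C^k$ (in fact $C^\infty$) convex expression whose unit ball is $\ep$-close to $B_{\ndot} = \bigcap_n \set{x}{f_n(x) \le 1}$.

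The smoothing tool I would use is a weighted $\ell_p$-type aggregate. Fixing a large even integer $p$ and positive weights $(\lambda_n)$, the function $x \mapsto \big(\sum_n \lambda_n f_n(x)^p\big)^{1/p}$ is, provided the series and its first $k$ derivatives converge uniformly on bounded sets, a $C^\infty$ norm: the inner series $S(x) = \sum_n \lambda_n f_n(x)^p$ is a convergent sum of $C^\infty$ convex functions, it is strictly positive off the origin because $(f_n)$ is norming, and $S^{1/p}$ is therefore $C^\infty$ away from $0$; positive homogeneity and convexity identify it as the norm induced by the bounded operator $x \mapsto (\lambda_n^{1/p} f_n(x))_n$ into $\ell_p(\N)$. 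One then combines this with a backbone term built from the given $C^k$ norm $\n{\cdot}_0$, suitably powered, in order to control the construction from below.

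The crux, and the step I expect to be the main obstacle, is reconciling smoothness with \emph{equivalence and fidelity}. If the weights $\lambda_n$ are summable, the derivative series converge effortlessly and one obtains a $C^\infty$ norm, but that norm is genuinely weaker than $\ndot$ — the operator into $\ell_p(\N)$ is then compact, so the resulting gauge fails to be $\ep$-equivalent to $\ndot$; indeed this is precisely why a separable space need not carry \emph{any} equivalent smooth norm, and it is here that the hypothesis that $X$ already admits a $C^k$ smooth norm must enter in an essential way. Conversely, taking the weights bounded below (so as to ``see'' every direction $f_n$ and preserve equivalence) destroys convergence of the series, and hence smoothness. The resolution I would pursue is to let the backbone norm $\n{\cdot}_0$ secure the lower estimate $\tn{x} \ge (1-\ep)\n{x}$ outright, reducing the role of the smoothed supremum to a \emph{correction} that need only trap the body inside $(1+\ep)B_{\ndot}$; since only $\ep$-fidelity is required, I would organise the $f_n$ into successive finite blocks and smooth each block separately, using the weak$^*$-compactness of $B_{X^*}$ to argue that, on each bounded region, finitely many functionals already determine the norm up to $\ep$. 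Verifying that this telescoping correction, added to the backbone, yields a convex $C^k$ function with nowhere-vanishing gradient on its boundary level set — so that the implicit function theorem delivers a $C^k$ gauge — is the technical heart of the argument, and the block decomposition is what keeps all $k$ derivative series simultaneously under control.
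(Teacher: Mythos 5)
First, a point of order: the paper does not prove this statement. Theorem \ref{thm-ht} is imported verbatim from H\'ajek--Talponen \cite{ht:14} and used as a black box, so there is no internal proof to compare your attempt against; the only meaningful comparison is with the argument of \cite{ht:14} itself, which is substantially more elaborate than your sketch.

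Your setup is sound (the weak$^*$-dense norming sequence $(f_n)$, the $\ell_p$-aggregate as a smoothing device, and the observation that summable weights make $x \mapsto (\lambda_n^{1/p} f_n(x))_n$ a compact operator into $\ell_p$, so the resulting gauge cannot be equivalent to $\ndot$), and your diagnosis of the central tension is exactly right. But both ideas you offer to resolve it fail. (i) The backbone cannot ``secure the lower estimate $\tn{x}\ge(1-\ep)\n{x}$ outright'': $\ndot_0$ is merely equivalent to $\ndot$, say $a\n{x}\le\n{x}_0\le b\n{x}$ with $b/a$ possibly large, so any multiple $c\n{\cdot}_0$ big enough to dominate $(1-\ep)\ndot$ in every direction already exceeds $(1+\ep)\ndot$ in others; a backbone term compatible with the upper bound can only be a small perturbation, and then the lower bound must come from the smoothed supremum itself --- which is precisely the unresolved difficulty. (ii) The claim that ``on each bounded region, finitely many functionals already determine the norm up to $\ep$'' is false in infinite dimensions: for any finite $F\subseteq X^*$ the subspace $\bigcap_{f\in F}\ker f$ is nontrivial, so $\max_{f\in F}|f(x)|$ vanishes at some unit vector of $X$; finitely many functionals control the norm up to $\ep$ only on norm-compact sets, and $w^*$-compactness of $B_{X^*}$ does not repair this. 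Consequently the block decomposition cannot simultaneously preserve equivalence and keep the derivative series under control, and the proposal stops exactly where the real proof must begin. For the record, the argument in \cite{ht:14} is of a different nature: rather than $\ell_p$-averaging a norming sequence, it assembles the approximating norm from countably many $C^k$ convex functions manufactured out of the given $C^k$ norm, composed with a suitable monotone smooth norm on a sequence space, with a delicate localisation guaranteeing that convexity and $C^k$ smoothness survive the composition near every point of the sphere --- precisely the mechanism your sketch identifies as ``the technical heart'' but does not construct.
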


There is an analogous result to Theorem \ref{thm-ht} for polyhedral norms.

\begin{defn}[{\cite[p.\ 265]{k:60}}]\label{polyhedral}
We say a norm $\ndot$ on a Banach space $X$ is \emph{polyhedral} if, given any finite-dimensional
subspace $Y$ of $X$, the restriction of the unit ball of $\ndot$ to $Y$ is a polytope.
\end{defn}

\begin{thm}[{\cite[Theorem 1.1]{dfh:98}}]\label{thm-dfh}
Let $X$ be a separable Banach space with a polyhedral norm. Then any equivalent norm on $X$ can be approximated by polyhedral norms.
\end{thm}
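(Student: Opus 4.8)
The plan is to characterise polyhedrality through a countable boundary possessing a uniform ``gap'', and then to manufacture such a boundary for a norm close to the target by grafting the rigidity of the ambient polyhedral norm onto supporting functionals of the target. Two facts drive the argument. First, a structural result of Fonf: since $X$ carries a polyhedral norm $|\cdot|$, there is a countable set $\{f_i\}\subset S_{(X,|\cdot|)^*}$ with $|x|=\sup_i f_i(x)$ and, crucially, the gap property $\limsup_i f_i(x)<|x|$ for every $x\neq0$. Second, the converse engine: if a norm $p$ can be written $p=\sup_j h_j$ for a bounded family $\{h_j\}\subset X^*$ with $\limsup_j h_j(x)<p(x)$ for all $x\neq0$, then $p$ is polyhedral; indeed, for a finite-dimensional $Y$, compactness of the $p$-sphere of $Y$ together with the gap yields a finite set $F$ with $p=\max_{j\in F}h_j$ on $Y$, so that $B_p\cap Y$ is a bounded finite intersection of half-spaces, i.e.\ a polytope. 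Thus the theorem reduces to the following: given an equivalent norm $\ndot$ and $\ep>0$, build $\{h_j\}\subset B_{(X,\ndot)^*}$ whose envelope $p$ satisfies the gap property and $\n{\cdot}\le(1+\ep)p$.

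For the construction I would fix an increasing sequence of finite-dimensional subspaces $X_1\subset X_2\subset\cdots$ with dense union, and a countable dense set $\{x_k\}\subset S_{(X,\ndot)}$ with norming functionals $g_k$. In each $X_m$ one inscribes a polytope in $B_\ndot\cap X_m$ to within $\ep$, producing finitely many functionals, which are extended to $X$ with dual-norm control and with the $m$-th block discounted by a factor $1-\delta_m$, where $\delta_m\downarrow0$. The raw family $\{g_k\}$ is already a boundary for $\ndot$, but it fails the gap property, since infinitely many $g_k$ may crowd toward the supporting functional at a given point; the purpose of using only finitely many functionals per scale, together with the ambient boundary $\{f_i\}$ to govern the tail directions, is precisely to discretise this crowding. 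Assembling all of these functionals into one countable family $\{h_j\}\subset B_{(X,\ndot)^*}$ gives an envelope $p:=\sup_j h_j\le\n{\cdot}$ automatically.

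The $\ep$-equivalence is the soft half of the verification: $p\le\n{\cdot}$ holds by construction, while density of $\{x_k\}$ and the $\ep$-inscribed polytopes force $\n{\cdot}\le(1+\ep)p$, so $p$ is $\ep$-equivalent to $\ndot$ in the sense of the paper. The substantive half is the gap property for $\{h_j\}$: one must show that at each $x\neq0$ only finitely many $h_j$ come within a fixed margin of $p(x)$. This is where the polyhedral hypothesis does the real work. Because the tail of the construction is dominated by a small multiple of the ambient structure $\{f_i\}$, which already satisfies $\limsup_i f_i(x)<|x|$, the high-index blocks contribute values bounded away from $p(x)$, and the discounts $\delta_m$ produce a uniform gap separating the finitely many near-active functionals from the rest.

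I expect the gap property to be the main obstacle, and the difficulty is structural rather than computational. Approximating an arbitrary, possibly strictly convex, target norm to within $\ep$ appears to demand infinitely many simultaneously active functionals near every direction, which is exactly what a polytope forbids. The assumption that $X$ already admits a polyhedral norm is what breaks this impasse: via Fonf's boundary $\{f_i\}$ and its gap it supplies enough rigidity to replace the (impossible) infinitely-active approximation by a locally finite one. The technical heart of the proof is therefore the bookkeeping that keeps the supports, coefficients and block-discounts mutually compatible, so that the gap survives globally while the envelope $p$ remains $\ep$-close to $\ndot$.
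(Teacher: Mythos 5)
A preliminary remark: the paper does not prove Theorem \ref{thm-dfh}; it is quoted from \cite{dfh:98} as background, so there is no internal proof to compare your attempt against, and I am judging it against the published argument. Your reduction is the correct one --- Fonf's gap condition (if $p=\sup_j h_j$ for a bounded countable family with $\limsup_j h_j(x)<p(x)$ for all $x\neq 0$, then $p$ is polyhedral, by the compactness argument you give) is exactly the engine used in \cite{f:80,dfh:98}, and the theorem does reduce to producing such a family whose envelope is $\ep$-equivalent to $\ndot$.

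However, the construction of that family is precisely where your proposal stops being a proof, and the mechanism you describe does not visibly work. First, the gap condition at $x$ forces $\sup_j h_j(x)$ to be \emph{attained}, with a definite margin over the limsup; your family (finitely many functionals per block $X_m$, plus support functionals $g_k$ of a dense sequence) provides no attaining functional at a general $x\notin\bigcup_m X_m$ --- for instance when $\ndot$ is smooth at $x$, the near-optimal functionals from successive blocks crowd $w^*$ towards the unique support functional and drive $\limsup_j h_j(x)$ up to $\sup_j h_j(x)$. Second, the discounts $1-\delta_m$ with $\delta_m\downarrow 0$ act in the wrong direction: the crowding occurs in the tail $m\to\infty$, exactly where your discount vanishes; and perturbing tail functionals by a small multiple of the ambient boundary $\{f_i\}$ controls only the limsup in $i$, not in $m$, since for fixed $x$ the block-$m$ parts can still converge to $\n{x}$ while the $f_i$-part sits at a fixed attaining index. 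The substance of \cite{dfh:98} is exactly the bridge you defer to ``bookkeeping'': one must first show, using the polyhedral structure of $X$ in an essential and non-formal way, that $\ndot$ can be $\ep$-approximated by a norm that genuinely possesses a countable boundary, i.e.\ a countable family on which the supremum is attained at every $x$; only then does the elementary scaling $h_n=(1+\ep_n)g_n$, $\ep_n\downarrow 0$ (essentially \cite[Theorem 3]{f:80}, quoted in the Introduction of this paper) deliver the gap, via $p(x)\geq(1+\ep_{n_0})\n{x}>\n{x}\geq\limsup_n(1+\ep_n)g_n(x)$ at an attaining index $n_0$. As it stands, your sketch asserts the conclusion of that bridge rather than proving it.
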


As is remarked at the end of \cite{ht:14}, very little is known in the non-separable case. In this paper, we will focus much of our attention on the following class of spaces.

\begin{defn}
Let $\Gamma$ be a set. The set $c_0(\Gamma)$ consists of all functions $x:\Gamma\to\R$, with the property that $\set{\gamma \in \Gamma}{|x(\gamma)| \geq \ep}$ is finite whenever $\ep > 0$. We equip $c_0(\Gamma)$ with the norm $\ndot_\infty$, where $\n{x}_\infty = \max\set{|x(\gamma)|}{\gamma \in \Gamma}$.
\end{defn}

When $\Gamma$ is uncountable, $c_0 (\Gamma)$ is non-separable. The structure of $c_0(\Gamma)$
strongly promotes the existence of the sorts of norms under discussion in this paper. For example, it is well known that the canonical norm on $c_0(\Gamma)$ is polyhedral, and that it can be approximated by $C^\infty$ smooth norms. In terms of finding positive non-separable analogues of Theorems \ref{thm-ht} and \ref{thm-dfh}, this class of spaces is a very plausible candidate.

The most general result concerning this class to date is given below. We shall call a norm $\ndot$ on $c_0 (\Gamma)$ a \emph{lattice norm} if $\n{x} \leq \n{y}$ whenever $x,y \in c_0(\Gamma)$ satisfy $|x(\gamma)| \leq |y(\gamma)|$ for each $\gamma \in \Gamma$.

\begin{thm}[{\cite[Theorem 1]{fhz:97}}]\label{thm:lattice}
Every equivalent lattice norm on $c_0(\Gamma)$ can be approximated by $C^{\infty}$ smooth norms.
\end{thm}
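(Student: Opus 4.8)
The plan is to attack $\ndot$ through its dual ball, exploiting the solidity that the lattice property forces, and then to neutralise separately the two distinct sources of non-differentiability. After rescaling we may assume $a\n{x}_\infty \leq \n{x} \leq \n{x}_\infty$ for some $a \in (0,1)$. Let $B^\ast \subseteq \ell_1(\Gamma)$ denote the dual unit ball. Because $\ndot$ is a lattice norm, $B^\ast$ is \emph{solid}: if $f \in B^\ast$ and $|g(\gamma)| \leq |f(\gamma)|$ for every $\gamma$, then $g \in B^\ast$. Hence
\[
\n{x} = \sup\set{\sum_{\gamma} f(\gamma)\,|x(\gamma)|}{f \in B^\ast,\ f \geq 0},
\]
and this representation isolates the two obstructions to smoothness: the \emph{coordinate corners}, arising from the absolute values $|x(\gamma)|$ and felt wherever a coordinate vanishes, and the \emph{global corners} of the body $B$, arising from the supremum.

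To remove the coordinate corners I would fix a single small scale $s > 0$ and replace each $|x(\gamma)|$ by $\psi_s(t) = \sqrt{t^2 + s^2} - s$, an even convex $C^\infty$ function with $\psi_s(0) = 0$ and $0 \leq |t| - \psi_s(t) \leq s$. The point of using one \emph{uniform} scale for all $\gamma \in \Gamma$ is decisive: the induced map $x \mapsto (\psi_s(x(\gamma)))_\gamma$ of $c_0(\Gamma)$ into itself is then $C^\infty$ with derivatives of every order bounded on bounded sets, whereas coordinate-dependent scales $s_\gamma \to 0$ would make the higher derivatives blow up. To remove the global corners I would aggregate the smoothed coordinates by a convex, even, $C^\infty$ functional $F$ whose sublevel set $\set{x}{F(x) \leq 1}$ approximates $B$ --- for instance a soft maximum, or an Orlicz-type gauge built over the solid family $B^\ast$ --- engineered so that the two approximations are compatible and $F$ depends, near each point, smoothly on the coordinates through $\psi_s$. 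The approximating norm $\tndot$ is then taken to be the Minkowski gauge of $\set{x}{F(x) \leq 1}$; provided $\nabla F \neq 0$ on $\set{x}{F(x)=1}$, which follows from the strict monotonicity of the building blocks on the relevant range, the implicit function theorem transfers the smoothness of $F$ to $\tndot$, yielding a $C^\infty$ norm.

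Finally one checks the approximation: for $x$ on the sphere the errors incurred are controlled by $s$ together with the softening parameter of the aggregate, so letting these tend to $0$ makes $\tndot$ $\ep$-equivalent to $\ndot$. The main obstacle, I expect, lies precisely in the global smoothing over the \emph{uncountable} index set $\Gamma$: one must build $F$ so that it is genuinely $C^\infty$ --- not merely $C^1$ --- and has non-vanishing gradient at every point of its level surface, including those at which uncountably many coordinates are of comparable size and no finite-dimensional reduction is immediately available. Here I would lean heavily on the structure of $c_0(\Gamma)$, namely that each vector has only finitely many coordinates exceeding any given threshold, to argue that the aggregate depends, up to an arbitrarily small error, on finitely many coordinates in a neighbourhood of each point of the sphere, thereby reducing the verification of all-order smoothness and of $\nabla F \neq 0$ to a finite-dimensional, and hence manageable, computation.
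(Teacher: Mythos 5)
First, a point of orientation: the paper does not prove Theorem \ref{thm:lattice} itself; the result is quoted from \cite{fhz:97}, and the paper's own contribution (Theorems \ref{summble-bdry} and \ref{c_0-main}, which subsume it) proceeds by an entirely different, dual route --- perturbing a boundary of $\ndot$ into one consisting solely of finitely supported functionals and then invoking Theorem \ref{thm-b-and-fpst} --- rather than by constructing a smooth convex function on the space directly. So your primal smoothing strategy must be judged on its own terms, and it contains a genuine gap.

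The first half of your argument is sound: solidity of the dual ball, the representation $\n{x}=\sup\set{\sum_\gamma f(\gamma)|x(\gamma)|}{f\in B^*,\ f\ge 0}$, and the uniform coordinate smoothing by $\psi_s$ (note that after your normalisation every $f\in B^*$ satisfies $\n{f}_{\ell_1}\le 1$, so the error $\sum_\gamma f(\gamma)\bigl(|x(\gamma)|-\psi_s(x(\gamma))\bigr)\le s$ really is uniform over $B^*$). The gap is the second half. The functions $x\mapsto\sum_\gamma f(\gamma)\psi_s(x(\gamma))$ are individually smooth and convex, but their supremum over $f\in B^*$ is exactly as non-smooth as $\ndot$ was to begin with; the construction of the aggregate $F$ --- ``a soft maximum, or an Orlicz-type gauge\dots engineered so that the two approximations are compatible'' --- is precisely the content of the theorem, not a routine step, since there is no off-the-shelf $C^\infty$ soft maximum of an uncountable, norm-non-compact family of functionals on a Banach space. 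The finite-dimensional reduction you invoke to rescue this does not close the gap, for two reasons. First, ``depends up to an arbitrarily small error on finitely many coordinates'' is not the same as exact local dependence on finitely many coordinates; only the latter transfers finite-dimensional smoothness to $c_0(\Gamma)$, and $\psi_s(t)=\sqrt{t^2+s^2}-s$ is nonzero for every $t\neq 0$, so the smoothed vector retains infinite support and the dependence is only approximate (one wants instead a convex smoothing that vanishes identically on a neighbourhood of $0$). Second, and more seriously, even after an exact reduction to a finite coordinate set $F$, the function $\sup_{f}\sum_{\gamma\in F}f(\gamma)\psi_s(x(\gamma))$ is a supremum of functions affine in the variables $\psi_s(x(\gamma))$ over the section of $B^*$, and it is still non-differentiable wherever that supremum is attained at more than one extreme point; one must additionally smooth these finite-dimensional pieces (for example by convolution) in a manner that is convex and consistent across overlapping coordinate sets, and this patching is where the real work of \cite{fhz:97} lies. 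As it stands, your proposal correctly isolates the two obstructions and removes the first, but the second is deferred rather than resolved.
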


The following result completely settles the approximation problem in the case of $c_0(\Gamma)$, from
the point of view of $C^\infty$ smooth norms and polyhedral norms. It solves a special case of \cite[Problem 114]{hj:14}.

\begin{thm}\label{c_0-main}
Let $\Gamma$ be an arbitrary set, and let $\ndot$ be an arbitrary equivalent norm on $c_0(\Gamma)$. Then $\ndot$ can be approximated by both $C^\infty$ norms and polyhedral norms.
\end{thm}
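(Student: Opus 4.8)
\section*{Proof proposal}

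The plan is to isolate a single structural feature that yields $C^\infty$ smooth and polyhedral approximants \emph{simultaneously}, and to verify it for every equivalent norm. Concretely, I would first establish, as a separate general principle, that an equivalent norm $\ndot$ on a Banach space can be approximated by norms of both types as soon as, for each $\ep>0$, there is a family $B\subseteq X^*$ which is \emph{norming up to $\ep$}, in the sense that $\frac{1}{1+\ep}\n{x}\le\sup_{f\in B}f(x)\le\n{x}$ for all $x$, is bounded and bounded away from $0$ in $X^*$, and is \emph{$\sigma$-uniformly discrete}: $B=\bigcup_{n\in\N}B_n$ with $\inf\set{\n{f-g}}{f,g\in B_n,\ f\neq g}>0$ for each $n$. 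Such a family converts into an $\ep$-equivalent norm of each desired type: for the polyhedral one takes the gauge of $\set{x}{\sup_{f\in B}f(x)\le 1}$, where boundedness and uniform discreteness force only finitely many $f\in B$ to be active on any finite-dimensional subspace; for the smooth one replaces the supremum by a $C^\infty$ Minkowski gauge built from a convergent sum $\sum_n a_n\sum_{f\in B_n}\theta(f(\cdot))$ with $\theta$ a suitable $C^\infty$ convex function, exactly as in the construction behind Theorem~\ref{thm:lattice}. The whole theorem then reduces to producing the family $B$.

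The construction of $B$ is where I would use the structure of $c_0(\Gamma)$ decisively. Fix the constants $a,b>0$ with $a\n{x}_\infty\le\n{x}\le b\n{x}_\infty$; the dual norm is then equivalent to $\ndot_1$ on $X^*=\ell_1(\Gamma)$, so every norm-one functional has controlled and countable support. The key point is that every $x\in c_0(\Gamma)$ is, up to $\ep$, finitely supported: $\n{x-P_Fx}$ is small for a suitable finite $F\subseteq\supp(x)$, where $P_F$ is the coordinate projection. Hence it suffices to norm the finitely supported vectors, and for these I would select, via Hahn--Banach, norm-attaining functionals and then \emph{discretise} them, rounding their coordinate values to a fine grid and organising them into countably many layers indexed by the support pattern and grid configuration. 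Two distinct functionals sharing a layer differ by at least the grid spacing in some coordinate, so each layer is uniformly discrete while there are only countably many layers; this delivers $\sigma$-uniform discreteness, and a sufficiently fine grid preserves the norming and boundedness requirements.

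The main obstacle, and the essential difference from the lattice case of Theorem~\ref{thm:lattice}, is that an arbitrary equivalent norm need not be invariant under coordinate truncation: truncating a functional $f$ to a finite set can \emph{increase} its dual norm, so one cannot blithely replace norm-attaining functionals by finitely supported surrogates while keeping the norming constant close to $1$. Overcoming this is the crux, and I expect it to require a careful layered or transfinite selection that tracks the dual norm gained under truncation and compensates through the grid fineness, using the finite-$\ep$-support property of $c_0(\Gamma)$ to keep everything local. Once a $\sigma$-uniformly discrete, $\ep$-norming family is in hand, the passage to smooth and polyhedral norms follows from the general principle above, so the entire weight of the argument rests on this discretisation step for non-lattice norms.
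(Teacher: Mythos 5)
Your reduction has two problems, one of principle and one of substance. The ``general principle'' you start from is false as stated: a bounded, $\ep$-norming, $\sigma$-uniformly norm-discrete family $B\subseteq X^*$ does not by itself yield polyhedral (or smooth) approximants. Already in $\R^2$ a countable dense subset of the Euclidean sphere is norming for $\ndot_2$, bounded, bounded away from $0$, and trivially $\sigma$-uniformly discrete (one point per layer), yet the gauge $\sup_{f\in B}f(\cdot)$ is the Euclidean norm, which is not polyhedral. Norm-discreteness of the layers does not force ``only finitely many $f$ active on a finite-dimensional subspace''; what is actually needed is a $w^*$-compactness condition alongside $w^*$-discreteness (this is the sufficient condition of \cite[Theorem 7]{fpst:14} quoted in the Introduction), or, as this paper uses via Theorem \ref{thm-b-and-fpst}, a boundary consisting of finitely supported functionals with respect to a strong M-basis. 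So even granting your family $B$, the conclusion would not follow, and the smooth half has the same defect: the local finiteness needed to make $\sum_n a_n\sum_{f\in B_n}\theta(f(\cdot))$ well behaved is a $w^*$-type condition, not a consequence of norm separation in $X^*$.

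Second, the step you yourself flag as ``the crux'' --- replacing norm-attaining functionals in $\ell_1(\Gamma)$ by finitely supported or discretised surrogates without degrading the norming constant --- is exactly where the entire content of the theorem lies, and your proposal does not carry it out. Rounding coordinates to a grid fails outright: a functional such as $\sum_k 2^{-k}e^*_k$ has no coordinates above a fine grid, so discretisation annihilates it, and more generally the $\ell_1$-tail left after removing the finitely many large coordinates can carry almost all of the mass. The paper's proof of Theorem \ref{summble-bdry} resolves this with a specific telescoping construction: level sets $G(f,n)=\set{\gamma}{|f(e_\gamma)|\ge p(f,n)}$, finitely supported approximants $h(f,n)=\sum_{i=1}^n(p(f,i)-p(f,i+1))w(f,i)$ converging to $f$ in $\ndot_1$, and, crucially, a corrector $g(f,n,m)$ that redistributes the missing mass so that every infinite-support extreme point of the new dual ball is either non-attaining or a nontrivial convex combination of the generators, hence expendable from the boundary. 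Without an argument of this kind, the proposal records the difficulty rather than overcoming it.
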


Theorem \ref{c_0-main} is a consequence of a more general result, Theorem \ref{summble-bdry}, which involves spaces having Markushevich bases. The proofs of both results are given in Section \ref{approx-norms}. 

We turn now to the main result of Section \ref{WCG_poly_nec}. The following notion, that of a boundary of a norm, is one of the central concepts in both Sections \ref{approx-norms} and \ref{WCG_poly_nec}.

\begin{defn} Let $(X,\ndot)$ be a Banach space. A subset $B$ of the closed unit ball $B_{X^*}$ is a called a \emph{boundary} of $\ndot$ if, for each $x$ in the unit sphere $S_X$, there exists $f \in B$ such that $f(x) = 1$.
\end{defn}

This is also known as a \emph{James boundary of $X$} in the literature. The dual unit sphere $S_{X^*}$ and the set $\ext(B_{X^*})$ of extreme points of the dual unit ball $B_{X^*}$ are always boundaries of $\ndot$, by the Hahn-Banach Theorem and (the proof of the) Krein-Milman Theorem, respectively. It is worth noting that the property of being a boundary is not preserved by isomorphisms in general:\ a boundary of $\ndot$ may not be a boundary of $\tndot$, where $\tndot$ is an equivalent norm. Since we will be changing norms in this paper, it will be necessary to bear this in mind.

Boundaries play a key role in the theory of both smooth norms and polyhedral norms. If $(X,\ndot)$ has a boundary that is countable or otherwise well-behaved, then $X$ enjoys good geometric properties as a consequence -- see, for instance, \cite{b:14,f:80,fpst:14,h:95}.

Recall that an element $f \in B_{X^*}$ is called a {\em $w^*$-strongly exposed} point of $B_{X^*}$ if there exists $x \in B_X$ such that $f(x)=1$ and, moreover, $\n{f-f_n} \to 0$ whenever $(f_n)\subseteq B_{X^*}$ is a sequence satisfying $f_n(x) \to 1$. It is a simple matter to check that the (possibly empty) set $w^*$-$\strg\exp(B_{X^*})$ of $w^*$-strongly exposed points of $B_{X^*}$ is contained in any boundary of $\ndot$. We recall the following important result of Fonf, concerning polyhedral norms.

\begin{thm}[{\cite[Theorem 1.4]{f:00}}]\label{main-fonf} Let $\ndot$ be a polyhedral norm on a Banach space $X$ having density character $\kappa$. Then $w^*$-$\strg\exp(B_{X^*})$ has cardinality $\kappa$ and is a boundary of $\ndot$ (so is the minimal boundary, with respect to inclusion). Moreover, given $f \in w^*$-$\strg\exp(B_{X^*})$, the set $A_f \cap B_X$ has non-empty interior, relative to the affine hyperplane $A_f := \set{x \in X}{f(x)=1}$. 
\end{thm}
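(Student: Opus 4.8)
The plan is to split the statement into four parts and to isolate the single genuinely difficult ingredient. First, the inclusion $w^*$-$\strg\exp(B_{X^*}) \subseteq B$ for every boundary $B$ — which yields minimality once the boundary property is established — is essentially immediate: if $f$ is $w^*$-strongly exposed by $x \in B_X$, then $f(x) = 1 \le \n{f}\,\n{x} \le \n{x} \le 1$ forces $\n{x} = 1$, so any boundary $B$ contains some $g$ with $g(x) = 1$; applying the defining property of strong exposedness to the constant sequence $f_n = g$ gives $\n{g - f} = 0$, whence $f = g \in B$. So the real content is (B) that $w^*$-$\strg\exp(B_{X^*})$ is itself a boundary, together with (C) the relative interior assertion and (D) the cardinality count, and I expect (B) to subsume the main difficulty.

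For (B) I would fix $x_0 \in S_X$ and manufacture a $w^*$-strongly exposed point $f$ with $f(x_0) = 1$, with $x_0$ itself as the exposing point. The engine is polyhedrality: in every finite-dimensional subspace $Y \ni x_0$, the section $B_X \cap Y$ is a polytope, so the face of $B_X \cap Y$ containing $x_0$ is supported by finitely many hyperplanes and, crucially, there is a \emph{gap} — points of $S_X \cap Y$ lying off this face are norm-separated from it. The delicate step is to promote this family of local, finite-dimensional gaps into a single functional on all of $X$ carrying a \emph{uniform} gap: concretely, to produce $f \in S_{X^*}$ and $\delta > 0$ so that any $g \in B_{X^*}$ with $g(x_0) > 1 - \delta$ forces $\n{g - f}$ to be small, which is exactly $w^*$-strong exposedness with exposing point $x_0$. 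I would obtain $f$ by a compactness and exhaustion argument over the finite-dimensional subspaces containing $x_0$, directed by inclusion and driven by an ultrafilter (or a suitable net) so as to pass to the $w^*$-limit of the local norming functionals, while checking that polyhedrality prevents near-norming functionals from accumulating and hence keeps $\delta$ bounded away from $0$. This uniformity of the gap is the principal obstacle, and is where the polyhedrality hypothesis must be used in full force rather than merely on individual sections.

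For (C), once $f$ is strongly exposed with exposing point $x_0$, I would argue that $x_0$ lies in the interior of the exposed face $A_f \cap B_X$ relative to $A_f$. Were this false, $x_0$ would be a relative boundary point of $A_f \cap B_X$, yielding a nonzero functional supporting this face inside the hyperplane $A_f$; perturbing $f$ along such a functional produces norming functionals of points arbitrarily close to $x_0$ that do not converge to $f$, contradicting strong exposedness (and, read in a two-dimensional section, contradicting the polytopal structure). Hence the relative interior is non-empty.

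Finally, for (D): the lower bound follows because any boundary $B$ has $w^*$-closed convex hull equal to $B_{X^*}$ (a one-line Hahn--Banach separation), so the $w^*$-density character of $X^*$ is at most $|B|$; since this density character equals the density character $\kappa$ of $X$, we get $\kappa \le |B|$. For the upper bound I would fix a dense set $D \subseteq S_X$ of cardinality $\kappa$ and reuse the uniform gap from (B): each point of $D$ is normed by a strongly exposed functional, and the gap makes this assignment locally constant, so that every $f \in w^*$-$\strg\exp(B_{X^*})$ already norms some point of $D$ (its exposing point may be approximated from $D$ without changing the associated strongly exposed functional). This exhibits $w^*$-$\strg\exp(B_{X^*})$ as the image of $D$ and gives cardinality at most $\kappa$. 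Combining the two bounds gives cardinality exactly $\kappa$, and minimality follows from the easy inclusion established at the outset.
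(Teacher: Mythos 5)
First, note that the paper offers no proof of this statement at all: it is quoted verbatim from Fonf \cite[Theorem 1.4]{f:00} as a known result, so your attempt can only be judged on its own terms. Your part (A) (minimality, given the boundary property) is correct and complete, and your upper cardinality bound via norm-separated exposing points is in the right spirit (it is essentially Lemma \ref{singledependence} of this paper). The rest, however, has genuine gaps.

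The central one is in (B). Your plan is to produce, for each $x_0 \in S_X$, a functional $f$ that is $w^*$-strongly exposed \emph{by $x_0$ itself}; the ``uniform gap'' you describe ($g(x_0) > 1-\delta$ forces $\n{g-f}$ small) is precisely that statement. This is false even in $c_0$ with its canonical polyhedral norm: the point $x_0 = e_1 + e_2$ strongly exposes nothing, since the constant sequences $e_1^*$ and $e_2^*$ both norm it and do not converge to a common limit; yet the theorem holds there because $x_0$ is normed by $e_1^*$, which is strongly exposed by the \emph{different} point $e_1$. So the exposing point must in general be moved away from $x_0$, and your sketch contains no mechanism for finding it. Worse, a $w^*$-limit (along an ultrafilter) of norming functionals of finite-dimensional sections is exactly the kind of functional that in a polyhedral space tends \emph{not} to attain its norm (this is the content of the various limit-point conditions in Fonf's work), so it is the wrong candidate. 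Part (C) is also unproved as stated: the implication ``$w^*$-strongly exposed $\Rightarrow$ the face $A_f \cap B_X$ has nonempty relative interior'' fails in Hilbert space, where every unit functional is $w^*$-strongly exposed and every face is a singleton; and in your perturbation argument the functionals $(f+tg)/\n{f+tg}$ \emph{do} converge in norm to $f$ as $t\to 0$, so no contradiction with strong exposedness arises. Finally, the lower cardinality bound rests on ``the $w^*$-density character of $X^*$ equals the density character of $X$,'' which is false in general (e.g.\ $\ell_\infty(\Gamma)=\ell_1(\Gamma)^*$ is $w^*$-separable whenever $|\Gamma|\leq\mathfrak{c}$). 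Each of these three points is where the full strength of polyhedrality has to enter, and none of them is supplied by the sketch.
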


In particular, if $X$ is separable and $\ndot$ is polyhedral, then $w^*$-$\strg\exp(B_{X^*})$ is a countable boundary. Conversely, according to \cite[Theorem 3]{f:80}, if $(X,\ndot)$ is a Banach space and $\ndot$ has a countable boundary $B$, then $X$ admits equivalent polyhedral norms that approximate \ndot. Thus, in the separable case, the existence of equivalent polyhedral norms can be characterised purely in terms of the cardinality of the boundary.

In the non-separable case however, any analogous characterisations, if they exist, must generally rely on more than the cardinality of the boundary alone. There exist Banach spaces $(X,\ndot)$ having no equivalent polyhedral norms, yet $X$ has density the continuum $\mathfrak{c}$, and $\ndot$ has boundary $B$ of cardinality $\mathfrak{c}$. Such Banach spaces can take the form $X=C(T)$, where $T$ is the 1-point compactification of a suitably chosen locally compact scattered tree -- see \cite[Theorem 10]{fpst:08} for more details.

Recall that a Banach space $X$ is \emph{weakly compactly generated} (WCG) if $X = \cl{\aspan}^{\ndot}(K)$, where $K \subseteq X$ is weakly compact. Separable spaces and reflexive spaces are WCG. Examples of WCG spaces that are neither include the $c_0(\Gamma)$ spaces above. The following is our main result of Section \ref{WCG_poly_nec}. It provides a little more information about the structure of the set $w^*$-$\strg\exp(B_{X^*})$, besides cardinality, given a WCG polyhedral Banach space.

\begin{thm}\label{WCGpoly} Let $X$ be WCG, and let the norm $\ndot$ on $X$ be polyhedral. Then the boundary $w^*$-$\strg\exp(B_{X^*})$ of $\ndot$ may be written as
\[
w^*\text{-}\strg\exp(B_{X^*}) \;=\; \bigcup_{n=1}^\infty D_n,
\]
where each $D_n$ is relatively discrete in the $w^*$-topology.
\end{thm}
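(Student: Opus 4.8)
The plan is to combine Fonf's description of the boundary (Theorem \ref{main-fonf}) with the structure theory of WCG spaces. Write $B = w^*\text{-}\strg\exp(B_{X^*})$. By Theorem \ref{main-fonf}, every $f \in B$ is $w^*$-strongly exposed by some $x_f \in B_X$, and the face $A_f \cap B_X$ contains a relative ball $\set{x_f + z}{z \in \ker f,\ \n{z} \le r_f}$ of some radius $r_f > 0$, lying in $S_X$. The first point to register is that this ``fatness'' must be used in an essential way: the conclusion cannot follow from cardinality alone, since a set of norm-one functionals, even one whose image under a coordinate map stays bounded away from $0$, need not be $\sigma$-discrete (think of an arc of exposed points of a smooth two-dimensional ball). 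Fatness is precisely the feature a polyhedral norm contributes, and it is what will rule out such condensation.

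First I would record the geometric consequence of fatness. Testing an arbitrary $g \in B_{X^*}$ against the flat piece above yields
\[
g(x_f) + r_f \n{g\restrict{\ker f}} \le 1 .
\]
Stratifying $B = \bigcup_{m} E_m$ with $E_m = \set{f \in B}{r_f \ge 1/m}$, this inequality shows that if $f \in E_m$ and $(f_k) \subseteq E_m$ satisfies $f_k \to f$ in the $w^*$-topology, then $f_k(x_f) \to 1$, hence $\n{f_k\restrict{\ker f}} \to 0$, and a short computation in the splitting $X = \ker f \oplus \R x_f$ gives $\n{f_k - f} \to 0$. Thus on each $E_m$ the $w^*$-topology and the norm topology share the same convergent sequences. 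Since $X$ is WCG, $(B_{X^*}, w^*)$ is Eberlein compact and therefore angelic, so accumulation in a subset of $E_m$ is detected by sequences; combined with the previous sentence, a subset of $E_m$ is relatively $w^*$-discrete precisely when it is relatively norm-discrete. It therefore suffices to exhibit, for each $m$, a countable decomposition of $E_m$ into relatively norm-discrete pieces.

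For this I would invoke a projectional resolution of the identity $(P_\alpha)_{\omega \le \alpha \le \mu}$ on $X$ (available since $X$ is WCG), where $\mu$ is the density character and each increment $(P_{\alpha+1} - P_\alpha)X$ is separable. The idea is to assign to each $f \in E_m$ the stage $\alpha$ at which the dual projections first capture it, together with an index inside the corresponding separable increment. Restrictions of $\ndot$ to subspaces remain polyhedral, so on each separable increment Theorem \ref{main-fonf} (applied with $\kappa = \aleph_0$) produces only countably many strongly exposed points; since the restriction of a $w^*$-strongly exposed functional to a subspace containing its exposing vector is again $w^*$-strongly exposed, each stage contributes only countably many elements of $E_m$, enumerable by an index $i \in \N$ independent of the stage. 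Setting $D_{m,i}$ to be those $f \in E_m$ carrying index $i$, one obtains the countable family $\set{D_{m,i}}{m, i \in \N}$, whose union is $B$.

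The main obstacle is to prove that each $D_{m,i}$ is relatively norm-discrete. Two elements of $D_{m,i}$ sharing the same index but detected at different stages $\alpha \ne \beta$ must be separated, and this is where the $w^*$-continuity of the $P_\alpha^*$, the fatness inequality, and strong exposure have to be made to cooperate: the idea is that $f$ ``lives'' essentially on the increment at its own stage, so a second functional detected later cannot be norm-close without forcing its stage down, a contradiction. Making this precise requires choosing the exposing vectors $x_f$ compatibly with the resolution, so that $x_f$ lies in the appropriate $P_\alpha X$; this is the delicate bookkeeping at the heart of the argument, and the fatness inequality of the second paragraph is what converts the resulting $w^*$-separation into the genuine norm-separation needed for discreteness. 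I expect this cross-stage separation, rather than the per-stage countability, to be the crux.
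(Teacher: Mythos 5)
Your opening moves are sound and in fact overlap with the paper's: the fatness inequality $g(x_f) + r_f\n{g\restrict{\ker f}} \le 1$ is correct, stratifying by a lower bound on $r_f$ is one of the two indices the paper also uses, and the reduction of relative $w^*$-discreteness to relative norm-discreteness on subsets of $w^*$-$\strg\exp(B_{X^*})$ (via strong exposedness together with angelicity of $(B_{X^*},w^*)$) is exactly how the paper finishes. But the heart of the theorem --- producing a \emph{countable} decomposition each of whose pieces is relatively norm-discrete --- is precisely the step you leave open. Your PRI-based scheme assigns to each $f$ a stage $\alpha$ and an index $i\in\N$ inside the countable set of strongly exposed points of the separable piece at that stage, and you then need any two functionals sharing the index $i$ but detected at different stages $\alpha\neq\beta$ to be norm-separated. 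Nothing in the proposal supplies this: the enumerations at different stages are chosen independently of one another, so $f^\alpha_i$ and $f^\beta_i$ can a priori be arbitrarily close, and you explicitly flag this ``cross-stage separation'' as unresolved. As written, the argument does not establish relative discreteness of any $D_{m,i}$; it reduces the theorem to a claim at least as hard as the theorem itself. (There are also smaller problems: ``the stage at which the dual projections first capture $f$'' is not defined, and the separable subspace to which you want to apply Theorem \ref{main-fonf} must contain the exposing vector $x_f$, which need not lie in any single increment $(P_{\alpha+1}-P_\alpha)X$.)

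The paper avoids projectional resolutions entirely. It takes a \emph{weakly compact} M-basis $(e_\gamma,e^*_\gamma)_{\gamma\in\Gamma}$ of the WCG space and stratifies $B$ by the pair $(n,m)$, where $n$ is the \emph{minimal} number of basis vectors needed to express a witness point $x$ for the flat face of $f$, and $m$ bounds both $1/r$ and the coefficients of $x$. Relative norm-discreteness of each piece $D_{n,m}$ is then proved by contradiction: if $f_k\to f$ in norm within $D_{n,m}$, the witnesses $x_k$ lie in a weakly compact set, so by the Eberlein--\v Smulyan theorem a subsequence converges weakly to some $y$; a geometric lemma (Lemma \ref{singledependence}) shows the $x_k$ are pairwise norm-separated by $2^{-m}$, which forces $y$ to be a combination of strictly fewer than $n$ basis vectors, and passing the face condition to the limit makes $y$ a witness for $f$ lying in $E_j$ with $j<n$, contradicting minimality of $n_f$. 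That minimality device, which converts ``no norm convergence of the witnesses'' into a contradiction, is the ingredient your sketch is missing; without it, or a concrete substitute for your cross-stage separation, the proposal is not a proof.
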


The theorem above should be compared to the following sufficient condition:\ if the norm $\ndot$ on $X$ admits a boundary $B$ such that $B=\bigcup_{n=1}^\infty D_n$ and $B=\bigcup_{m=1}^\infty K_m$, where each $D_n$ is relatively discrete in the $w^*$-topology, and each $K_m$ is $w^*$-compact, then $\ndot$ can be approximated by polyhedral norms \cite[Theorem 7]{fpst:14}. Thus Theorem \ref{WCGpoly} can be considered as a step towards a characterisation of the existence of polyhedral norms, in the WCG case.

\section{Approximation of norms}\label{approx-norms}




Our main results, throughout this section and the next, concern a class of spaces which include all spaces of the form $c_0(\Gamma)$, namely those that admit the following type of basis.
\begin{defn}
We call an indexed set of pairs $( e_{\gamma}, e_{\gamma}^* )_{\gamma \in \Gamma} \subseteq X \times X^*$ a \emph{Markushevich basis} (or M-basis) if 
\begin{itemize}
\item $e_\alpha^*(e_\beta) = \delta_{\alpha\beta},$ (that is, $( e_{\gamma}, e_{\gamma}^* )_{\gamma \in \Gamma}$ is a biorthogonal system);
\item $\cl{\aspan}^{\ndot}( e_{\gamma} )_{\gamma \in \Gamma} = X$, and
\item $( e_{\gamma}^* )_{\gamma \in \Gamma}$ separates the points of $X$.
\end{itemize}
Furthermore, an M-basis is called \emph{strong} if $x \in \cl{\aspan}^{\ndot}\set{e_\gamma}{e^*_\gamma(x) \neq 0}$ for all $x \in X$, \emph{shrinking} if $X^* = \cl{\aspan}^{\ndot}( e_{\gamma}^* )_{\gamma \in \Gamma}$, and \emph{weakly compact} if $\set{e_\gamma}{\gamma \in \Gamma} \cup \{0\}$ is weakly compact.
\end{defn}

The existence of an M-basis allows us to define supports of functionals in the dual space.

\begin{defn}
Let $X$ be a Banach space with an M-basis $(e_\gamma, e_{\gamma}^*)_{\gamma \in \Gamma}$ and let $f \in X^*$. Define the \emph{support} of $f$ (with respect to the basis) to be the set
\[
\supp(f) = \set{\gamma \in \Gamma}{f(e_\gamma) \neq 0}.
\]
\label{eg1-2} We say $f$ has \emph{finite support} if $\supp(f)$ is finite.
\end{defn}

The main result of this section, Theorem \ref{summble-bdry}, states that if $X$ has a strong M-basis then, given the right circumstances, the norm on $X$ can be approximated by norms having boundaries that consist solely of elements having finite support. The following result illustrates the relevance of such boundaries to the current discussion. It amalgamates two theorems, both of which are stated with broader hypotheses in their original forms.

\begin{thm}[{\cite[Theorem 2.1]{b:14} and \cite[Theorem 7]{fpst:14}}]\label{thm-b-and-fpst}
Let a Banach space $X$ have a strong M-basis, and suppose that the norm $\ndot$ has a boundary consisting solely of elements having finite support. Then $\ndot$ can be approximated by both $C^\infty$ norms and polyhedral norms.
\end{thm}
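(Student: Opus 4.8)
The statement collects two theorems proved by different techniques, so the plan is to derive the smooth and polyhedral conclusions separately, but to base both on a single structural observation that I expect to be doing the real work. Write $\n{x} = \sup_{f \in B} f(x)$, where $B$ is the given boundary of finitely supported functionals. The first thing I would prove is that, because the M-basis is \emph{strong}, each such $f$ depends only on the coordinates in its support: if $f \in X^*$ has finite support $S = \supp(f)$ and $e_\gamma^*(x) = 0$ for every $\gamma \in S$, then strongness places $x$ in $\cl{\aspan}^{\ndot}\set{e_\gamma}{\gamma \notin S}$, on which $f$ vanishes, so $f(x) = 0$; applying this to differences shows that $f(x)$ is determined by $(e_\gamma^*(x))_{\gamma \in S}$. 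Thus the boundary endows $\ndot$ with a coordinate-dependent structure reminiscent of the canonical norm on $c_0(\Gamma)$, and it is this structure, rather than the mere cardinality of $B$, that both constructions exploit.

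For the polyhedral conclusion I would aim to verify the hypothesis of \cite[Theorem 7]{fpst:14} quoted in the introduction, namely that $B$ can be written simultaneously as a countable union $\bigcup_n D_n$ of $w^*$-relatively discrete sets and as a countable union $\bigcup_m K_m$ of $w^*$-compact sets. The coordinate dependence from the first step is what makes this plausible: grouping the functionals according to the size of their supports, one expects each group to behave, in the $w^*$-topology, like the family $\set{\pm e_\gamma^*}{\gamma \in \Gamma}$ in $c_0(\Gamma)$, which is $w^*$-discrete away from the origin and sits inside a $w^*$-compact set. Granting such a decomposition, \cite[Theorem 7]{fpst:14} then delivers polyhedral norms approximating $\ndot$.

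For the smooth conclusion I would follow \cite[Theorem 2.1]{b:14} and smooth out the supremum directly. Fixing a $C^\infty$ convex even function $\theta$ that vanishes on a neighbourhood of the origin and grows past $1$, I would form $\phi(x) = \sum_{f \in B} \theta(f(x))$ and take the Minkowski functional of a suitable sublevel set of $\phi$ as the approximating norm $\tndot$. Provided the sum is locally finite, in the sense that only finitely many terms are nonzero near each point, $\phi$ is genuinely $C^\infty$; checking that its derivative does not vanish on the relevant level set, together with $\ep$-equivalence, would then yield a $C^\infty$ approximating norm.

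The main obstacle in both halves is the same: controlling how infinitely many distinct finitely supported functionals may cluster near norming a given point. The coordinate-dependence lemma reduces each $f(x)$ to finitely many coordinates, but one must still rule out, or organise, infinite families of near-norming functionals in order to obtain genuine local finiteness (for the smooth sum) and the discrete-compact decomposition (for the polyhedral criterion); in effect one needs an analogue of the coordinate decay that holds automatically in $c_0(\Gamma)$. Carrying this out using only a \emph{strong} M-basis---rather than a shrinking or weakly compact one, where better $w^*$-convergence of the $e_\gamma^*$ would be available---is where I expect the delicate work to lie, and it is precisely the finite support of the boundary functionals, fed through the first step, that makes it manageable.
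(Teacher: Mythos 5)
The first thing to say is that the paper does not prove this statement at all: it is imported as a black box, being a special case of \cite[Theorem 2.1]{b:14} (for the smooth conclusion) and of \cite[Theorem 7]{fpst:14} (for the polyhedral one), both ``stated with broader hypotheses in their original forms''. So a self-contained argument has to actually reprove those theorems, and your sketch defers precisely the parts that constitute their proofs. Your opening observation is correct and is indeed the right structural lever: strongness of the M-basis gives $f(x)=f\bigl(\sum_{\gamma\in\supp f}e^*_\gamma(x)e_\gamma\bigr)$ for every finitely supported $f$, so the norm acquires a finite coordinate-dependence structure. The problem is that both of the routes you then propose break down on legitimate instances of the hypotheses.

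Take $X=c_0$ with its canonical norm and unit vector basis (a strong M-basis), and let $B=\set{\pm e^*_n}{n\in\N}\cup\set{te^*_1+(1-t)e^*_2}{t\in[0,1]}$. This is a boundary (it contains $\set{\pm e^*_n}{n\in\N}$) consisting entirely of functionals supported on at most two coordinates. For the polyhedral half: $B$ is \emph{not} a countable union of $w^*$-relatively discrete sets, because the segment $\set{te^*_1+(1-t)e^*_2}{t\in[0,1]}$ is $w^*$-homeomorphic to $[0,1]$, every relatively discrete subset of which is countable; grouping by support size cannot help, since all of these functionals already have support of size at most $2$. (Restricting $t$ to the irrationals also destroys $\sigma$-compactness.) Hence the hypothesis of the discrete/$\sigma$-compact criterion simply cannot be verified for a general finite-support boundary: the finite-support case is a separate specialisation of the broader statement of \cite[Theorem 7]{fpst:14}, not a consequence of the discrete/compact one, and your proposed reduction is not available. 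For the smooth half: at $x=e_1+e_2$ every $f_t=te^*_1+(1-t)e^*_2$ satisfies $f_t(x)=1$, so $\sum_{f\in B}\theta(f(x))$ has uncountably many terms equal to $\theta(1)>0$ and diverges; no choice of $\theta$ vanishing near the origin restores local finiteness. In both halves the missing step is exactly the one you flag as ``where the delicate work lies'' --- replacing or reorganising $B$ so that the functionals nearly norming a given point are finitely many, or at least suitably separated --- and that step is the entire content of the cited theorems (compare the perturbation scheme in Theorem \ref{summble-bdry} of this paper, which however consumes the present statement rather than proving it). As written, the proposal correctly identifies why the theorem should be true but does not establish either conclusion.
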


Now, for the rest of this section, we will assume that the Banach space $X$ has a strong M-basis $(e_{\gamma},e_{\gamma}^*)_{\gamma \in \Gamma}$, such that $\n{e_\gamma}=1$ for all $\gamma \in \Gamma$. Furthermore, we will suppose that there is some fixed $L\geq 0$ satisfying $\n{e^*_\gamma} \leq L$ for all $\gamma \in \Gamma$.

Given $f \in X^*$, set $\n{f}_1 = \sum_{\gamma \in \Gamma} |f(e_{\gamma})|,$ whenever this quantity is finite, and set $\n{f}_1=\infty$ otherwise. Observe that if $x = \sum_{\gamma \in F} e_{\gamma}^*(x) e_{\gamma}$, for some finite $F \subseteq \Gamma$, then
\[
|f(x)| \leq \sum_{\gamma \in F} |e_{\gamma}^*(x)|\,|f(e_{\gamma})| \leq L \n{x} \sum_{\gamma \in F} |f(e_{\gamma})| \leq L\n{x}\n{f}_1,
\]
whence $\n{f} \leq L \n{f}_1$ for all $f \in X^*$. It is also easy to see that $\ndot_1$
is a $w^*$-lower semicontinuous function on $X^*$, and that given $r>0$, the norm-bounded set
\[
W_r = \set{f \in X^*}{\n{f}_1 \leq \lambda},
\]
is symmetric, convex and $w^*$-compact.



Let us consider the set $B=\set{f \in S_{X^*}}{\n{f}_1 < \infty}$. Evidently, $B$ is the countable union of the sets $S_{X^*}\cap W_r$, $r\in \N$, which are $w^*$-closed in $S_{X^*}$. If $S_{X^*} \cap W_r$ contains a non-empty norm-open subset of $S_{X^*}$, for some $r \in \N$, then it is a straightforward matter to show that there exists $M\geq 0$ such that $\n{f}_1 \leq M\n{f}$ for all $f \in X^*$, whence $S_{X^*} \cap W_M=S_{X^*}$ and $X$ is isomorphic to $c_0(\Gamma)$ via the map $x \mapsto (e_\gamma^*(x))_{\gamma \in \Gamma}$. If there is no such $r$, then of course $B$ is of first category in $S_{X^*}$. If $X$ is not isomorphic to any space of the form $c_0(\Gamma)$, then $B \neq S_{X^*}$, but $B$ may still be a boundary of $\ndot$ -- see, for instance, examples in \cite{b:14,fpst:14,g:09}. We shall be interested in cases where $B$ is a boundary of $\ndot$.

The following lemma will be used in Theorem \ref{summble-bdry}.

\begin{lem}\label{shrinking}
Suppose that $B$ as defined above is a boundary of $\ndot$. Then $X^* = \cl{\aspan}^{\ndot}(e^*_\gamma)$,
i.e., the M-basis of $X$ is shrinking.
\end{lem}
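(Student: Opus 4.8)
The plan is to write $V := \cl{\aspan}^{\ndot}(e^*_\gamma)$ and prove that $V = X^*$; since $V$ is a norm-closed subspace of $X^*$, this is exactly the assertion of the lemma. The first thing I would record is that $W_r \subseteq V$ for every $r$. If $\n{f}_1 < \infty$ then $\supp(f)$ is countable, say $\supp(f) = \set{\gamma_n}{n \in \N}$, and the partial sums $g_n := \sum_{k=1}^n f(e_{\gamma_k}) e^*_{\gamma_k}$ satisfy $(f - g_n)(e_\gamma) = 0$ for $\gamma \in \{\gamma_1,\dots,\gamma_n\}$ and $(f - g_n)(e_\gamma) = f(e_\gamma)$ otherwise, so that $\n{f - g_n}_1 = \sum_{k > n}|f(e_{\gamma_k})| \to 0$. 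The estimate $\n{\cdot} \leq L\n{\cdot}_1$ recorded above then gives $\n{f - g_n} \to 0$, whence $f \in V$. As $W_r \subseteq \set{f}{\n{f}_1 < \infty}$, this yields $W_r \subseteq V$ for all $r$, and in particular $B \subseteq V$.

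Next I would exploit the hypothesis that $B$ is a boundary beyond mere norming. The key external input is that a James boundary \emph{(I)-generates} the dual unit ball (a consequence of Simons' inequality): applied to the increasing countable cover $B = \bigcup_{r \in \N}(S_{X^*} \cap W_r)$, this gives
\[
B_{X^*} \;=\; \cl{\mathrm{conv}}^{\ndot}\!\Big(\bigcup_{r \in \N} \cl{\mathrm{conv}}^{w^*}(S_{X^*} \cap W_r)\Big).
\]
Since each $W_r$ is $w^*$-compact and convex, $\cl{\mathrm{conv}}^{w^*}(S_{X^*} \cap W_r) \subseteq W_r \subseteq V$. As $V$ is a norm-closed subspace, hence norm-closed and convex, the whole right-hand side lies in $V$. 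Thus $B_{X^*} \subseteq V$, and because $X^* = \bigcup_{t > 0} t\,B_{X^*}$ while $V$ is a subspace, I conclude $V = X^*$; that is, the M-basis is shrinking.

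The main obstacle is the middle step, namely upgrading $w^*$-density to norm-density. The equality $\cl{\mathrm{conv}}^{w^*}(B) = B_{X^*}$ holds automatically for any boundary (by a bipolar argument, using that $\sup_{f \in B} f(x) = \n{x}$ for all $x$), but this only gives that $V$ is $w^*$-dense, which is already clear from the fact that $(e^*_\gamma)$ separates the points of $X$. What makes the argument succeed is that the boundary property permits replacing the single $w^*$-closed convex hull of $B$ by the norm-closure of the union of the $w^*$-closed convex hulls of the pieces $S_{X^*} \cap W_r$, each of which is trapped inside the $w^*$-compact set $W_r \subseteq V$; it is this interplay between attainment (inherent in ``boundary'', as opposed to mere norming) and the $w^*$-compactness of the $W_r$ that closes the gap. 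That the extra structure is genuinely needed can be seen from the fact that a norm-closed subspace may contain a James boundary without being all of $X^*$ --- for instance the boundary $\set{\pm\delta_t}{t \in [0,1]}$ of $C[0,1]$, whose closed linear span is the proper subspace of purely atomic measures.
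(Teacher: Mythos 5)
Your proof is correct, but it takes a genuinely different route from the paper's. The paper argues by contradiction: if some $f \in X^*$ stays at distance greater than $\ep$ from $\aspan(e^*_\gamma)_{\gamma\in F}$ for every finite $F\subseteq\Gamma$, a gliding-hump construction produces normalised vectors $(x_n)$ with finite, pairwise disjoint supports and $f(x_n)>\ep$; since every $g\in B$ satisfies $\n{g}_1<\infty$, the disjointness of supports forces $g(x_n)\to 0$ for all $g$ in the boundary $B$, so Rainwater's Theorem gives $x_n\to 0$ weakly, a contradiction. You instead observe that each $W_r$ is contained in $V=\cl{\aspan}^{\ndot}(e^*_\gamma)$ (your partial-sum argument via $\ndot\leq L\ndot_1$ is fine) and then invoke the Fonf--Lindenstrauss (I)-generation theorem for James boundaries, applied to the increasing cover $B=\bigcup_r (S_{X^*}\cap W_r)$, to conclude $B_{X^*}\subseteq V$. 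Both arguments ultimately rest on Simons' inequality --- Rainwater's Theorem and (I)-generation are each consequences of it --- so neither is cheaper in terms of underlying technology; yours is shorter and avoids the block-sequence construction, at the cost of citing a blacker box that is not among the paper's references, whereas the paper's version is self-contained modulo a textbook statement. Your closing observation, that mere containment of a boundary in a norm-closed subspace proves nothing (the Dirac boundary of $C[0,1]$ spans only the atomic measures), correctly isolates the point where the $w^*$-compactness of the sets $W_r$ does the real work, and explains why the trivial $w^*$-density of $V$ is not enough.
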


\begin{proof}
Let $F \subseteq \Gamma$ be finite, and define
\[
X_F \;=\; \cl{\aspan}^{\ndot}(e_\gamma)_{\gamma \in\Gamma\setminus F}\qquad\text{and}\qquad
W_F \;=\; \aspan(e_\gamma^*)_{\gamma \in F}.
\]
Then $W_F = X_F^\perp$ (the inclusion $X_F^\perp \subseteq W_F$ follows from the fact that the basis is strong), and 
thus $X^*/W_F$ naturally identifies with $X_F^*$, and $\n{f\restrict{X_F}}=d(f,W_F)$ for all $f \in X^*$, where
\[
d(f,W_F) \;=\; \inf\set{\n{f-g}}{g \in W_F}.
\]

Suppose, for a contradiction, that there exists $f \in X^*$ and $\ep>0$, such that $d(f,W_F)>\ep$ for all finite $F \subseteq \Gamma$. Let $F_0$ be empty. Since $\n{f}=d(f,W_{F_0})>\ep$, take a unit vector $x_0 \in X$ having finite support, such that $f(x_0) > \ep$. Set $F_1=\supp x_0$. Since $\n{f\restrict{X_{F_1}}}=d(f,W_{F_1})>\ep$, there exists a unit vector $x_1 \in X$ having finite support in $\Gamma\setminus F_1$, such that $f(x_1)>\ep$. Define $F_2 = F_1 \cup \supp x_1$. Continuing like this, we get a sequence of unit vectors $(x_n)$ having finite, pairwise disjoint supports, such that $f(x_n)>\ep$ for all $i$. Clearly, $(x_n)$ is not weakly null.

On the other hand, if $f \in B$ and $y = \sum_{\gamma \in F} e^*_\gamma(y) e_\gamma$ is a unit vector,
where $F\subseteq\Gamma$ is finite, then
\[
|f(y)| \;\leq\; \sum_{\gamma\in\Gamma} |e_\gamma^*(y)|\,|f(e_\gamma)| \;\leq\; L\sum_{\gamma \in F}|f(e_\gamma)|.
\]
It follows that $f(x_n) \to 0$ as $n\to\infty$. This holds for every element of $B$, which is a boundary,
so $x_n \to 0$ weakly, by Rainwater's Theorem \cite[Theorem 3.134]{fhhspz:11}. This is a contradiction.
\end{proof}

We can now prove Theorem \ref{summble-bdry}. The method of proof owes a debt to \cite[Proposition 3.1]{g:09}, although the approximation scheme used in that result fails in the case under consideration here, and substantial modifications must be made.

\begin{thm}\label{summble-bdry}
Let a Banach space $X$ have an M-basis as above, and suppose that $B$ as above is a boundary. Given $\ep>0$, there exists an $\ep$-approximation $\tndot$ of $\ndot$, which has a boundary consisting solely of elements having finite support. Consequently, by Theorem \ref{thm-b-and-fpst}, $\ndot$ can
be approximated by $C^\infty$ smooth norms and polyhedral norms.
\end{thm}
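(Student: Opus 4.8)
The plan is to build the approximating norm $\tndot$ together with an explicit family of finite-support functionals that serves as its boundary; once such a norm is produced, Theorem \ref{thm-b-and-fpst} delivers the $C^\infty$ and polyhedral approximations with no further geometric argument. The whole difficulty is therefore concentrated in manufacturing a norm, $\ep$-equivalent to $\ndot$, whose value is \emph{attained} on every vector by a finite-support functional.

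First I would record what Lemma \ref{shrinking} buys us: since the M-basis is shrinking, every $f$ with $\n{f}_1<\infty$ satisfies $f=\sum_\gamma f(e_\gamma)e_\gamma^*$, the series converging absolutely in norm because $\n{e_\gamma^*}\le L$ (the two functionals agree on each $e_\beta$, and $(e_\gamma^*)$ separates points). Consequently, for $f\in B$ and finite $S\subseteq\supp(f)$, the truncation $f_S=\sum_{\gamma\in S}f(e_\gamma)e_\gamma^*$ has finite support and $\n{f-f_S}\le L\,t_S$, where $t_S=\sum_{\gamma\notin S}|f(e_\gamma)|\to 0$ as $S\uparrow\supp(f)$. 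This is precisely the mechanism that lets finite-support functionals approximate the boundary $B$ in norm.

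Next comes the construction. Fix a bounded, strictly increasing, \emph{concave} gauge $\phi$ with $\phi(0)=0$ and $0\le\phi\le\ep$ (a truncated multiple of $t\mapsto\sqrt t$ will do), and set
\[
A=\set{\pm(1+\phi(t_S))\,f_S}{f\in B,\ S\subseteq\supp(f)\text{ finite}}.
\]
Define $\tndot$ by $\tn{x}=\sup\set{a(x)}{a\in A}$; its dual unit ball is then the $w^*$-closed, symmetric, convex hull of $A$. Since $\n{a}\le 1+\phi(t_S)\le 1+\ep$ for every $a\in A$, we get $\tn{\cdot}\le(1+\ep)\n{\cdot}$. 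For the reverse bound, given $x\in S_X$ choose $f\in B$ norming $x$; then $f_S(x)\to f(x)=1$ and $t_S\to0$ along $S\uparrow\supp(f)$, so $(1+\phi(t_S))f_S(x)\to1$ and hence $\tn x\ge\n x$. Thus $\tndot$ is $\ep$-equivalent to $\ndot$, and every element of $A$ is a finite-support functional lying in the dual unit ball of $\tndot$. The role of the \emph{concave} gauge is to force finite truncations to \emph{strictly overshoot} the norming value: because $1-f_S(x)=(f-f_S)(x)\le L\,t_S$ while $\phi(t_S)$ is of order $\sqrt{t_S}\gg t_S$ for small $t_S$, one has $(1+\phi(t_S))f_S(x)>1$ for all large finite $S$. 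So the supremum $\tn x=s$ lies in $(1,1+\ep]$ and is approached by finite-support functionals from strictly above the ``floor'' value $1$.

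The hard part is attainment: showing $s=\tn x$ is realised by some $a\in A$, and not merely by an infinite-support cluster point of a maximising sequence in the $w^*$-compact ball $(1+\ep)B_{X^*}$. This is exactly where Godefroy's scheme fails, and the remedy I would pursue is to confine near-maximisers to a finite block of coordinates determined by $x$ alone, thereby reducing the maximisation to a finite-dimensional problem on which the supremum is automatically attained. Two features should cooperate to give this confinement: to beat the floor value a generator must retain a non-negligible reward $\phi(t_S)$, hence a tail $t_S$ bounded away from $0$, capping how much of its parent $f$ the truncation may capture; and since $x\in X$ its coordinates $e_\gamma^*(x)$ are negligible off a finite set, so any near-maximal value $a(x)=\sum_{\gamma\in S}a(e_\gamma)\,e_\gamma^*(x)$ must be generated on a fixed finite block of dominant coordinates of $x$. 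Making this trade-off quantitative — so that the confinement, and with it the $\ep$-equivalence, holds uniformly in $x$ — is the substantial modification the proof demands. Granting it, $A$ is a boundary of $\tndot$ consisting solely of finite-support functionals, and Theorem \ref{thm-b-and-fpst} completes the proof.
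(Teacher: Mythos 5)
Your construction leaves the central difficulty unresolved, and you say so yourself: the final paragraph describes a hoped-for ``confinement'' argument and then proceeds by ``Granting it''. That granted step is precisely where the content of the theorem lies, and the strategy you sketch is unlikely to succeed as stated: knowing that every near-maximiser must keep a tail $t_S$ bounded away from $0$ does not bound $|S|$ or localise $S$, so it does not reduce the supremum to a finite-dimensional problem. The paper's proof never establishes that the supremum is attained by one of the explicit generators. Instead it takes the boundary to be $\ext(B_{(X,\ttrin)^*})\setminus D$, where $D$ is the set of $w^*$-cluster points coming from arbitrarily deep levels $V_r$ of the generating set; the level-dependent inflation factors $(1+2^{-r}\ep)$ force every element of $D$ to have $\n{\cdot}$-norm at most $1$, hence to be non-norm-attaining for $\tndot$, so the remaining extreme points form a boundary by Milman's theorem. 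Infinite-support extreme points are then excluded by exhibiting any such functional as a \emph{nontrivial convex combination} $l=\sum_m \lambda_m j(l,n,m)$ of generators. This last step depends essentially on the layer-cake decomposition ($p(f,n)$, level sets $G(f,n)$, partial sums $h(f,n)$, and the correction terms $g(f,n,m)$ that redistribute the missing $\ndot_1$-mass uniformly over $G(f,m)$), together with the identity $\n{f-h(f,n)}_1=\sum_{i>n}(p(f,i)-p(f,i+1))|G(f,i)|$ which produces the weights $\lambda_m$. Plain truncations $f_S$ admit no such convex representation, so your generating family lacks the structure that makes the paper's extreme-point argument work; a Fr\'echet--Urysohn/Corson compactness argument is also needed to extract the relevant sequences.

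There is also a concrete error in the ``easy'' half. You assert $\n{a}\leq 1+\phi(t_S)\leq 1+\ep$ for $a=\pm(1+\phi(t_S))f_S$, which requires $\n{f_S}\leq 1$. This is false in general: a coordinate truncation of a norm-one functional need not have norm at most one (one only gets $\n{f_S}\leq\n{f}+L\,t_S$, and $t_S$ is large for small $S$); this already fails for arbitrary equivalent norms on $c_0(\Gamma)$, the main case of interest. Since $A$ contains all finite truncations, the bound $\tn{x}\leq(1+\ep)\n{x}$ does not follow, so even the $\ep$-equivalence is not established. The paper avoids this by admitting $j(f,n,m)$ into $V_r$ only when $\n{f-j(f,n,m)}<2^{-(r+2)}\ep$, with a tolerance tied to the level $r$ containing $f$; some analogous restriction, calibrated against $\n{f}_1$, would be needed in your scheme, and it would then conflict with your overshoot argument, which relies on including truncations with $t_S$ not small.
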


\begin{proof}
Fix $\ep \in (0,1).$ Suppose $f \in X^*$ satisfies $\n{f}_1 < \infty$. We define a sequence of positive numbers and a sequence of subsets of $\Gamma$ inductively. To begin, set
\[
p(f,1) = \max \set{|f(e_\gamma)|}{\gamma \in \Gamma} \quad \text{and} \quad G(f,1) = \set{\gamma \in \Gamma}{|f(e_\gamma)| = p(z,1)}.
\]
Given $n \geq 2$, we define
\[
  p(f,n) = \begin{cases}
       \max \set{|f(e_\gamma)|}{\gamma \in \Gamma \backslash G(f,n-1)} & \text{if } \Gamma\backslash G(z,n-1) \neq \emptyset \\
       0 & \text{otherwise},
     \end{cases}
\]
\[
\text{and }G(f,n) = \{ \gamma \in \Gamma : |f(e_\gamma)| \geq p(f,n) \}.
\]
Observe that the set $G(f,n)$ is finite if and only if $p(f,n) \neq 0$ and, in this case, $\n{f}_1 \geq p(f,n) |G(f,n)|$. By induction, $|G(f,n)| \geq n$ for all $n$, so $p(f,n) \leq \n{f}_1 n^{-1}$ and, in particular, $p(f,n) \to 0$. By construction, the sequence $(p(f,n))$ is decreasing, and strictly decreasing on the set of indices $n$ at which it is non-zero. If $p(f,n) = 0$ for some $n \in \N,$ then $f(e_\gamma) \neq 0$ for at most finitely many $\gamma$ and hence $f$ has finite support. Thus, when $f$ has infinite support, we get a strictly decreasing sequence of positive numbers $p(f,n) \rightarrow 0$, and a strictly increasing sequence of finite sets $(G(f,n)).$

Provided $G(f,n)$ is finite, we define
\[
w(f,n) = \sum_{\gamma \in G(f,n)} \sgn(f(e_{\gamma}))e_{\gamma}^*,
\]
\[
\text{and } h(f,n) = \sum_{i=1}^n (p(f,i) - p(f, i+1))w(f,i).
\]
Let $\gamma \in \Gamma$. If $\gamma \in \Gamma \backslash \bigcup_{n=1}^{\infty} G(f,n)$, then $h(f,m)(e_\gamma) = 0 = f(e_\gamma)$ for all $m$. Otherwise, let $n$ be minimal, subject to the condition $\gamma \in G(f,n)$. By minimality, we have $p(f,n)=|f(e_\gamma)|$. If $m < n$, then $h(f,m)(e_{\gamma}) = 0$. If $m \geq n$, then we can see that
\begin{align*}
h(f,m) (e_\gamma) &= \sum_{i=n}^{m} (p(f,i) - p(f, i+1))\sgn(f(\gamma))\\
& = [p(f,n) - p(f,n+1)\\
& \quad + p(f,n+1) - p(f,n+2) \\
& \quad + \ldots - \ldots \\
& \quad + p(f,m) - p(f,m+1)]\text{sgn}(f(e_\gamma))\\
& = |f(e_\gamma)|\sgn(f(e_\gamma)) - p(f,m+1)\sgn(f(e_\gamma))\\
& = f(e_\gamma) - p(f,m+1)\sgn(f(e_\gamma)).
\end{align*}
From the calculation above and the fact that $p(f,m+1) < |f(e_\gamma)|$, we have
\[
|h(f,m) (e_\gamma)| = |\text{sgn}(f(e_\gamma)(|f(e_\gamma)| - p(f,m+1))| = |f(e_\gamma)| - p(f,m+1).
\]
Since $p(f,m+1) \geq 0$, we obtain $|h(f,m) (e_\gamma)| \leq |f(e_\gamma)|$.

Therefore, for all $\gamma \in \Gamma$, $|h(f,m) (e_\gamma)| \leq |f(e_\gamma)|$ and $h(f,m) (e_\gamma) \to f(e_\gamma)$ as $m \to \infty$. We apply Lebesgue's Dominated Convergence Theorem to conclude that $\n{f-h(f,m)}_1 \rightarrow 0.$ Since $\ndot \leq L \ndot_1, $ we also get $\n{f-h(f,m)} \rightarrow 0$. Since the signs of $w(f,i)(e_\gamma)$ and $w(f,i')(e_\gamma)$ agree whenever they are non-zero,
\[
\n{h(f,n)}_1 = \sum_{i=1}^{n} (p(f,i) - p(f, i+1))\n{w(f,i)}_1 = \sum_{i=1}^{n} (p(f,i) - p(f, i+1))|G(f,i)|.
\]
Therefore, if $f$ has infinite support, then $\n{f}_1 = \sum_{i=1}^{\infty} (p(f,i) - p(f, i+1))|G(f,i)|$.

Given $m > n$, define
\[
   g(f,n,m) = \begin{cases}
       {\displaystyle \frac{\n{f-h(f,n)}_1}{|G(f,m)|}\,w(f,m)} & \text{if } |G(f,m)| <  \infty, \\
       0 & \text{otherwise}.
     \end{cases}
\]
and $j(f,n,m) = h(f,n) + g(f,n,m)$, $m>n$. Observe that $\supp(j(f,n,m)) \subseteq G(f,m)$.

Let $B_r = B_{X^*} \cap W_r = \set{f \in B_{X^*}}{\n{f}_1 \leq r}$. Of course, $B \subseteq \bigcup_{r=1}^{\infty} B_r$. We let
\[
V_r = \set{j(f,n,m)}{f \in B_r,\, m > n \text{ and } \n{f - j(f,n,m)} < 2^{-(r+2)} \ep},
\]
\[
\text{and set }V = \bigcup_{r=1}^{\infty} (1 + 2^{-r}\ep)V_r.
\]
Define $\tn{x} = \sup \{ f(x) : f \in V \}$. This is the norm that we claim $\ep$-approximates $\ndot$ and has a boundary consisting solely of elements having finite support.

First of all, we prove that $\n{x} < \tn{x} \leq (1+\ep)\tn{x}$ whenever $x \neq 0$.
Take $x \in X$ with $\n{x} = 1$ and let $f \in B$ such that $f(x) =1$ (which is possible as $B$ is a boundary of $\ndot$). Let $r$ be minimal, such that $f \in B_r$. Since $\n{f} \leq L \n{f}_1$ for all $f \in X^*$, and $\n{f- j(f,n,m)}_1 \leq 2\n{f-h(f,n)}_1$, it follows that there exists $n$ such that $\n{f - j(f,n,m)} < 2^{-(r+2)} \ep$ whenever $m > n$. In particular,
\[
\tn{x} \geq (1+2^{-r}\ep)j(f,n,n+1)(x) \geq (1+2^{-r}\ep)(1-2^{-(r+2)}\ep) \geq 1+2^{-(r+1)}\ep.
\]

%
To secure the other inequality, simply observe that if $f \in B_r$, $m > n$ and $\n{f - j(f,n,m)} < 2^{-(r+2)} \ep$, then
\begin{align*}
(1+2^{-r}\ep)j(f,n,m)(x) &\leq (1+2^{-r}\ep)(1 + 2^{-(r+2)} \ep)\\
&\leq 1 + (2^{-r}+2^{-(r+2)} + 2^{-(2r+2)})\ep \leq 1 + \ep.
\end{align*}
This means that $\tn{x} \leq 1+\ep$. By homogeneity, $\n{x} < \tn{x} \leq (1+\ep)\n{x}$ whenever $x \neq 0$.

Now we show that $\tndot$ has a boundary consisting solely of elements having finite support. By Milman's Theorem \cite[Theorem 3.66]{fhhspz:11}, we know that $\ext (B_{(X,\ttrin)^*}) \subseteq \cl{V}^{w^*}$. Define
\[
D =  \bigcap_{r=1}^{\infty} \left(\cl{\bigcup_{s=r}^{\infty}(1+2^{-s}\ep)V_s }^{w^*}\right),
\]
and let $d \in D$. For each $r \in \N, \n{d} \leq (1 + 2^{-r}\ep)(1+2^{-(r+2)} \ep)$, and hence $\n{d} \leq 1$. Therefore, if $\tn{x} =1$, then
\[
d(x) \leq \n{d}\,\n{x} \leq \n{x} < 1.
\]
It follows that, with respect to $\tndot$, none of the elements of $D$ are norm-attaining. Consequently, $\widetilde{B}  = \ext(B_{(X,\ttrin)^*}) \backslash D$ is a boundary of $\tndot$. We claim that every
element of $\widetilde{B}$ has finite support.

Given $f \in \widetilde{B}$, we have $f \in (1+2^{-r}\ep)\cl{V_r}^{w^*}$ for some $r \in \N$. For a contradiction, we will assume that $f$ has infinite support. According to Lemma \ref{shrinking}, our M-basis is shrinking. It follows that $\supp g$ is countable for all $g \in X^*$. Thus, $\cl{V_r}^{w^*}$ is Corson compact in the $w^*$-topology (see \cite[Definition 14.40]{fhhspz:11} for the definition), which implies that it is a Fr\'echet-Urysohn space \cite[Exercise 14.57]{fhhspz:11}. In particular, there exist sequences $(f_k) \subseteq B_r$, and $(n_k), (m_k) \subseteq \N$, with $n_k < m_k$ for all $k\in\N$, such that $(j(f_k,n_k, m_k)) \subseteq V_r$ and $j(f_k,n_k, m_k) \overset{w^*}{\longrightarrow} l$, where $l = (1+2^{-r}\ep)^{-1}f$. 

We claim that, in fact, $f_k \overset{w^*}{\longrightarrow} l$. First, we show that $h(f_k,n_k) \overset{w^*}{\longrightarrow} l$. To this end, suppose that $|G(f_k,m_k)| \nrightarrow \infty$. Then by taking a subsequence if necessary, there exists $N \in \N$ such that $|\supp (j(f_k,n_k,m_k))| \leq |G(f_k,m_k)| \leq N$ for all $k$. But as $j(f_k,n_k,m_k) \overset{w^*}{\longrightarrow} l$, this would force $|\supp(l)| \leq N < \infty$, which is not the case. Thus we must have $|G(f_k,m_k)| \rightarrow \infty$. Therefore, for all $\gamma \in \Gamma$, $g(f_k,n_k,m_k)(e_\gamma) \to 0$ as $k\to\infty$. Since $\ndot\leq L \ndot_1,$ the sequence $(g(f_k,n_k,m_k))$ is bounded. Therefore, $g(f_k,n_k,m_k) \overset{w^*}{\longrightarrow} 0$ and hence $h(f_k,n_k) \overset{w^*}{\longrightarrow} l$.

We will now show that $f_k - h(f_k,n_k) \overset{w^*}{\longrightarrow} 0$. For each $\gamma \in \Gamma$, $|f_k(\gamma) - h(f_k,n_k)(e_\gamma)| \leq |f_k(e_\gamma)|$, so $\n{f_k - h(f_k,n_k)}_1 \leq  \n{f_k}_1.$ Therefore, $(f_k - h(f_k,n_k))$ is a bounded sequence. Given $\gamma \in \Gamma$,
\[
|(f_k-h(f_k,n_k))(e_\gamma)| \leq p(f_k,n_k+1) \leq \frac{\n{f_k}_1}{|G(f_k,n_k+1)|} \leq \frac{r}{|G(f_k,n_k+1)|}.
\]
Since $h(f_k,n_k) \overset{w^*}{\longrightarrow} l$, as above, the infinite support of $l$ ensures that
$|G(f_k,n_k)| \to \infty$. Therefore, $(f_k-h(f_k,n_k))(e_\gamma) \to 0$ and hence $f_k - h(f_k,n_k) \overset{w^*}{\longrightarrow} 0$ as $k\to \infty$. It follows that $f_k \overset{w^*}{\longrightarrow} l$ as claimed, and hence $l \in B_r$.

Fix $n \in \N$ such that $\n{l-h(l,n)}_1 < L^{-1}2^{-(r+3)}\ep$. Then for all $m > n$,
\[
\n{l-j(l,n,m)} \leq L\n{l-j(l,n,m)}_1 \leq 2L\n{l-h(l,n)}_1 < 2^{-(r+2)}\ep.
\]
So $j(l,n,m) \in V_r$ for all $m > n$. Let
\[
\lambda_m = \frac{(p(l,m) - p(l,m+1))|G(l,m)|}{\n{l-h(l,n)}_1}.
\]
Note that $\lambda_m>0$ whenever $m>n$. Since $\n{l-h(l,n)}_1 = \sum_{i=n+1}^{\infty} (p(l,i) - p(l,i+1))|G(l,i)|$, we get $\sum_{m=n+1}^{\infty} \lambda_m = 1.$
\begin{align*}
\sum_{m=n+1}^{\infty} \lambda_m j(l,n,m) & = \sum_{m=n+1}^{\infty} \lambda_m h(l,n) + \sum_{m=n+1}^{\infty} \lambda_m g(l,n,m) \\
& = h(l,n) + \sum_{m=n+1}^{\infty} (p(l,i) - p(l, i+1))w(l,i) = l .
\end{align*}
Therefore, $f$ is a nontrivial convex combination of elements of $(1+2^{-r}\ep ) V_r \subseteq B_{(X,\ttrin)^*}$, so $f \notin \ext(B_{(X,\ttrin)^*})$, and hence $f \notin \widetilde{B}$. This gives us our desired contradiction. In conclusion, $\widetilde{B}$ is a boundary of $\tndot$ consisting solely of functionals having finite support.
\end{proof}

Theorem \ref{c_0-main} becomes a trivial consequence of Theorem \ref{summble-bdry}.

\begin{proof}[Proof of Theorem \ref{c_0-main}]
In this case $B=S_{(c_0(\Gamma),\|\cdot\|)^*}$, so it is a boundary of $\ndot$.
\end{proof}

It is worth remarking that the implication (d) $\Rightarrow$ (c) of \cite[Theorem 2.5]{fst:14} is essentially Theorem \ref{summble-bdry}, but with the additional assumption that the M-basis is countable. The method of proof in that case is completely different from the one presented here.

%

\section{A necessary condition for polyhedrality in WCG spaces}\label{WCG_poly_nec}

We begin this section with a lemma. It is based on straightforward geometry and is probably folklore, but is included for completeness since we have no direct reference for it.

\begin{lem}\label{singledependence}
Suppose that $D \subseteq B_{X^*}$ has the property that for all $f \in D$, there exists
$x_f \in X$ and $r_f >0$ such that $\n{x_f + z}=f(x_f+z)$ whenever $\n{z}<r_f$. Then
\begin{enumerate}
\item $r_f \leq \n{x_f}$, and
\item $\n{z} < r_f$ and $g \in D\setminus\{f\}$ implies $g(x_f+z)<\n{x_f+z}$.
\end{enumerate}
In particular, if $f,g \in D$ are distinct then $\n{x_g-x_f} \geq r_f$.
\end{lem}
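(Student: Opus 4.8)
The plan is to lean on two elementary facts. Since $D \subseteq B_{X^*}$, every $f \in D$ satisfies $f(w) \le \n{w}$ for all $w \in X$; and the hypothesis says $f$ \emph{equals} the norm on the whole open ball $\set{x_f+z}{\n{z}<r_f}$. Both parts then follow by comparing the linearity of the functionals against the convexity and symmetry of the norm, and the final inequality is a direct consequence of part (2).

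For part (1), I would argue by contradiction. If $r_f > \n{x_f}$, then $0$ lies in the (open) ball $\set{x_f+z}{\n{z}<r_f}$, so this ball contains a symmetric neighbourhood $\set{w}{\n{w}<\delta}$ of the origin for some $\delta>0$. For such $w$ we would have both $\n{w}=f(w)$ and $\n{-w}=f(-w)=-f(w)$; since $\n{w}=\n{-w}$ this forces $f(w)=0$ and hence $\n{w}=0$, which is absurd for $w\neq 0$. Thus $r_f\le\n{x_f}$. A useful by-product is that for any $z$ with $\n{z}<r_f$ one has $\n{x_f+z}\ge\n{x_f}-\n{z}>0$, so every point of the ball is nonzero.

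For part (2), fix $z$ with $\n{z}<r_f$ and $g\in D\setminus\{f\}$, and suppose toward a contradiction that $g(x_f+z)=\n{x_f+z}$. Since $\n{z}<r_f$, there is $\delta>0$ with $\n{z+u}<r_f$ whenever $\n{u}<\delta$, so $f(x_f+z+u)=\n{x_f+z+u}\ge g(x_f+z+u)$ on that small ball. As $f$ and $g$ agree at $x_f+z$ (both equal $\n{x_f+z}$), subtracting gives $f(u)\ge g(u)$ for all $\n{u}<\delta$; replacing $u$ by $-u$ yields the reverse inequality, so $f=g$ on a neighbourhood of $0$ and hence, by linearity, $f=g$ on $X$, contradicting $g\neq f$. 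Therefore $g(x_f+z)<\n{x_f+z}$.

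Finally, for the concluding inequality I would take distinct $f,g\in D$, set $z=x_g-x_f$, and suppose $\n{z}<r_f$. Part (2) (with this $g$) then gives $g(x_g)=g(x_f+z)<\n{x_g}$, whereas the hypothesis applied to $g$ with the trivial displacement $z=0$ gives $\n{x_g}=g(x_g)$, a contradiction; hence $\n{x_g-x_f}\ge r_f$. I expect no serious obstacle: the only point requiring care is the passage from local to global agreement of $f$ and $g$ in part (2), together with the verification that the points involved are nonzero, which is exactly what part (1) secures.
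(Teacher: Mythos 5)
Your proof is correct. Part (1) and the final separation claim follow the paper's argument essentially verbatim: the symmetric-neighbourhood trick forcing $f$ to vanish near the origin, and the substitution $z=x_g-x_f$ combined with $g(x_g)=\n{x_g}$. Part (2), however, is executed by a genuinely different (and arguably cleaner) route. The paper assumes $g(x_f+z)=\n{x_f+z}$ and produces a direction $y\in\ker f$ with $g(y)>0$ and $\n{y}<r_f-\n{z}$, handling separately the degenerate case $\ker f\subseteq\ker g$ (where $g=\alpha f$, and one needs $\n{x_f+z}>0$, supplied by part (1), to force $\alpha=1$); perturbing by $y$ then gives $\n{x_f+y+z}=f(x_f+z)$ and simultaneously $\n{x_f+y+z}>f(x_f+z)$. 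You instead cancel the common value at $x_f+z$ to get $f(u)\geq g(u)$ on a small ball about the origin, symmetrize to conclude $f=g$ everywhere, and contradict $g\neq f$ directly. Your version avoids the case analysis entirely and, despite your closing remark, does not actually need part (1) anywhere in part (2) (the paper's version does use it, to rule out $\n{x_f+z}=0$ in the colinear case). Both arguments are elementary and complete; the paper's gives the slightly more concrete picture of a direction along which $g$ strictly increases, while yours isolates the cleaner functional-analytic fact that a dominated linear functional agreeing with a norm-attaining one on an open set must coincide with it.
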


\begin{proof}$\;$
\begin{enumerate}
\item Suppose that $\n{x_f} < r_f$. Let $y \in X$ satisfy
$\n{y} < r_f-\n{x_f}$. Then $\n{\pm y - x_f} < r_f$ and so
\[
f(y) \;=\; \n{y} \;=\; \n{-y} \;=\; f(-y) \;=\; -f(y),
\]
meaning that $y \in \ker f$. It follows that $f=0$, which is impossible.
\item Suppose $\n{z}<r_f$, $g \in D\setminus\{f\}$ and $g(x_f + z)=\n{x_f + z}$.
Since $g \neq f$ we can find $y \in \ker f$ such that $g(y)>0$ and $\n{y}<r_f-\n{z}$.
Otherwise we would have $\ker f \subseteq \ker g$, so $g = \alpha f$ for some $\alpha$,
and since $f(x_f+z)=\n{x_f+z}=g(x_f+z)=\alpha f(x_f+z)$, and $\n{x_f+z} > 0$ by (1),
we conclude that $g=f$, which is not the case. Thus $\n{y+z} < r_f$ and so
\[
\n{x_f + y + z} \;=\; f(x_f + y + z) \;=\; f(x_f + z).
\]
On the other hand,
\[
\n{x_f + y + z} \;\geq\; g(x_f + y + z) \;>\; g(x_f + z) \;=\; \n{x_f+z} \;=\; f(x_f+z).
\]
\end{enumerate}
Finally, if $f,g \in D$ are distinct and $\n{x_g-x_f} < r_f$, then by (2) we would have
\[
\n{x_g} \;=\; g(x_g) \;=\; g(x_f + (x_g-x_f)) \;<\; \n{x_f + (x_g-x_f)} \;=\; \n{x_g}. \qedhere
\]
\end{proof}

Armed with this lemma, we can give the proof of Theorem \ref{WCGpoly}.

\begin{proof}[Proof of Theorem \ref{WCGpoly}] Since $X$ is WCG, we can find a weakly compact M-basis $(e_\gamma,e^*_\gamma)_{\gamma \in \Gamma}$ of $X$ (see, for instance \cite[Theorem 13.16]{{fhhspz:11}}). Let $E_n$ be the set of $x \in X$ that can be written as a linear combination of at most $n$ elements of $(e_\gamma)_{\gamma \in \Gamma}$. Let us define $B:=w^*$-$\strg\exp(B_{X^*})$. According to Theorem \ref{main-fonf}, for each $f \in B$, we can find a point $x \in \aspan(e_\gamma)_{\gamma \in \Gamma}$ that lies in the interior of $A_f \cap B_X$, where $A_f$ is the supporting hyperplane as defined in that theorem. By a straightforward argument, it follows that there exists $r>0$ such that $\n{x + z}=f(x+z)$ whenever $\n{z}<r$. Any such $x$ belongs to some $E_n$. Therefore, given $f \in B$, we can define $n_f$ to be the minimal $n\in\N$ for which we can find an $x$ and $r$ as above, with $x \in E_n$.

Define $D_{n,m}$ to be the set of all $f \in B$ such that $n_f=n$, and there exist $x$ and $r$, as described above, which in addition satisfy $r \geq 2^{-m}$ and
\[
x = \sum_{\gamma \in F} a_\gamma e_{\gamma},
\]
where $F\subseteq \Gamma$ has cardinality $n$ and $|a_\gamma| \leq m$ for all $\gamma \in F$. Any such pair $(x,r)$ will be called a witness for $f \in D_{n,m}$.

Evidently, $B=\bigcup_{n,m=1}^\infty D_{n,m}$. We claim that each $D_{n,m}$ is relatively discrete in the norm topology. For a contradiction, suppose otherwise and let $f,\, f_k \in D_{n,m}$ such that $\n{f-f_k}\to 0$. For each $k \in \N$, select a witness $(x_k,r_k)$ for $f_k$. The set
\[
L = \set{\sum_{\gamma \in F} a_\gamma e_{\gamma}}{F \subseteq \Gamma \text{ has cardinality $n$ and }|a_\gamma| \leq m \text{ for all }\gamma \in F},
\]
is weakly compact, being a natural continuous image of $[-m,m]^n \times (\set{e_\gamma}{\gamma \in \Gamma} \cup \{0\})^n$. Thus, by the Eberlein-\v Smulyan Theorem, and by taking a subsequence of $(x_k)$ if necessary, we can assume that the $x_k$ tend weakly to some $y \in L$. We claim that $y \in E_j$ for some $j < n$. Indeed, if
\[
y \;=\; \sum_{\gamma \in F} a_\gamma e_{\gamma},
\]
where $F\subseteq\Gamma$ has cardinality $n$ and $a_\gamma \neq 0$ for all $\gamma\in F$, then there exists a $K$ for which $e^*_{\gamma}(x_k) \neq 0$ for all $\gamma\in F$ and all $k \geq K$. Because each $x_k$ can be expressed as a linear combination of $n$ elements of $(e_\gamma)_{\gamma \in \Gamma}$, it follows that $x_k \in \aspan(e_{\gamma})_{\gamma\in F}$ whenever $k \geq K$. Indeed, if
\[
w \;=\; \sum_{\gamma \in G} b_\gamma e_\gamma,
\]
where $G \subseteq \Gamma$ has cardinality $n$, and if $e^*_\gamma(w) \neq 0$ for all $\gamma\in F$, then necessarily $F \subseteq G$, and equality of these sets follows since their cardinalities agree. Because the $x_k$, $k \geq K$, belong to a finite-dimensional space, it follows that $\n{y-x_k} \to 0$. However, by Lemma \ref{singledependence}, we know that the $x_k$ are uniformly separated in norm by $2^{-m}$ $(\leq r_k)$, so they cannot converge in norm to anything.

Thus $y \in E_j$ for some $j < n$, as claimed. Now fix $z \in X$ such that $\n{z} < 2^{-m}$.
We have $\n{x_k + z} = f_k(x_k+z)$ for all $k$, because $2^{-m} \leq r_k$. As $\n{f-f_k} \to 0$ and $x_k+z \to y + z$ weakly,
we get $\n{x_k+z} \to f(y+z) \leq \n{y+z}$. On the other hand, by $w$-lower semicontinuity of
the norm, $\n{y+z} \leq f(y+z)$. So the equality $\n{y+z}=f(y+z)$ holds whenever $\n{z} < 2^{-m}$. In particular, $1 = \n{x_k} \to \n{y}$. However $y \in E_j$ and $j<n$, and this contradicts the minimal choice of $n_f=n$.

Thus each $D_{n,m}$ is relatively discrete in the norm topology. Since $D_{n,m} \subseteq B$ and since the norm and $w^*$-topologies agree on $B$, it follows that $D_{n,m}$ is relatively discrete in the $w^*$-topology as well. 
\end{proof}

Finally, we recall that a Banach space $X$ is called {\em weakly Lindel\"of determined} (WLD) if $B_{X^*}$ is Corson compact in the $w^*$-topology. The class of WLD spaces includes all WCG spaces. Any polyhedral Banach space is an Asplund space (this follows, for example, from \cite[Proposition 3.7]{f:00}), and any WLD Asplund space is WCG \cite[Theorem 8.3.3]{fa:97}. Therefore Theorem \ref{WCGpoly} extends to all WLD polyhedral spaces.


\medskip\emph{Acknowledgment.} The authors would like to thank P.\ H\'ajek and S.\ Troyanski for bringing the topic of Section \ref{approx-norms} to their attention, and for useful discussions concerning the matter.

\end{document}